\documentclass{amsart}
\usepackage[hypertex]{hyperref}
\usepackage{amssymb,amsmath,amsthm}

\newtheorem{theorem}{Theorem}

\parindent1.5em
\parskip1.5ex plus0.5ex minus0.5ex
\textheight215mm 
\frenchspacing

 \newcommand{\Z}{\ensuremath{\mathbb{Z}}}
 \newcommand{\Q}{\ensuremath{\mathbb{Q}}}
 
 \newcommand{\C}{\ensuremath{\mathbb{C}}}

\newcommand{\ord}{{\rm ord}}

\begin{document}
\title{\bf On the Semigroup of Artin's L-functions holomorphic at $s_0$. II}
\author{Florin Nicolae}
\date{\today}
\address{Simion Stoilow Institute of Mathematics\\ of the
Romanian Academy\\ P.O.BOX 1-764\\
RO-014700 Bucharest, Romania}
\email{Florin.Nicolae@imar.ro}
\begin{abstract}
Let $K/{\Q}$ be a finite Galois extension, and let $s_0\neq 1$ be a complex
number. We present two new criteria for the Artin L-functions to be holomorphic at $s_0$.

{\it Key words:} Artin L-function

MSC: 11R42
\end{abstract}
\maketitle

\indent Let $K/\Q$ be a finite Galois extension with the Galois group $G$, $\chi_1,\ldots,\chi_r$ the irreducible characters of $G$, $d_j=\chi_j(1)$ the dimension of $\chi_j$, $j=1,\ldots,r$, $f_1=L(s, \chi_1, K/\Q),\ldots,f_r=L(s, \chi_r, K/\Q)$ the corresponding Artin $L$-functions, $Ar:=\{f_1^{k_1}\ldots f_r^{k_r}\mid k_1\geq 0,\ldots,k_r\geq 0\}$ the multiplicative semigroup of all Artin $L$-functions. Artin proved (\cite{Ar}, Satz 5, P. 106) that $f_1,\ldots,f_r$ are
multiplicatively independent, that is, 
a relation 
\[f_1^{k_1}\cdot\ldots\cdot f_r^{k_r}=1,\]
where $k_1,\ldots,k_r$ are integers, implies $k_1=\ldots=k_r=0$. This implies that $Ar$ is a 
free semigroup on the generators $f_1,\ldots,f_r.$ For $f,g\in Ar$ we write $f\mid g$ if there exists $h\in Ar$ such that $g=fh$. For 
$s_0\in\C,s_0\neq 1$, let $\mathit{Hol}(s_0)$ be the 
subsemigroup of $Ar$ consisting of the L-functions which are holomorphic at
$s_0$. Artin has conjectured (\cite{Ar}) that every $L$-function is holomophic at $s_0$, that is
$$\mathit{Hol}(s_0)=Ar.$$ I have proved in \cite{Ni}, Theorem 1, that $\mathit{Hol}(s_0)$ is a finitely generated semigroup. The only invertible element in $\mathit{Hol}(s_0)$ is the identity. Every
element of $\mathit{Hol}(s_0)$ is a product of irreducible elements. The set of irreducible elements , denoted by ${\rm
Hilb}(\mathit{Hol}(s_0))$ (the Hilbert basis of $\mathit{Hol}(s_0)$), is finite. By Brauer's induction theorem of characters and by class field
theory, every element of $Ar$ is a quotient of two elements of $\mathit{Hol}(s_0)$. This
implies that the group generated by $\mathit{Hol}(s_0)$ is the free abelian group
$\{f_1^{k_1}\cdot\ldots\cdot f_r^{k_r}\mid k_1,\ldots,k_r\in\Z\}$ of rank $r$.
It follows that the number of elements in ${\rm Hilb}(\mathit{Hol}(s_0))$ is at least
$r$.  The semigroup $\mathit{Hol}(s_0)$ is factorial if and only if the number of elements in ${\rm Hilb}(\mathit{Hol}(s_0))$ is $r$. 
 In \cite{Ni}, Theorem 2, I have proved a necessary and sufficient condition for the holomorphy of the $L$-functions at $s_0$:

\noindent The following assertions are equivalent:\\
i) Artin's conjecture is true: $\mathit{Hol}(s_0)=Ar.$\\
ii) The number of elements in ${\rm Hilb}(\mathit{Hol}(s_0))$ is $r$, and $\prod_{i\in
I}f_i\in \mathit{Hol}(s_0)$ for every subset $I\subset \{1,\ldots,r\}$ with $r-1$
elements.

\noindent Here I prove more: 
\begin{theorem}
The following assertions are equivalent:\\
i) Artin's conjecture is true: $\mathit{Hol}(s_0)=Ar.$\\
ii) The semigroup $\mathit{Hol}(s_0)$ is factorial and for every $k,l\in\{1,\ldots,r\}$, $k\neq l$ there exists $f\in \mathit{Hol}(s_0)$ such that $f_k\mid f$ and $f_l\nmid f$.\\
iii) The semigroup $\mathit{Hol}(s_0)$ is factorial and there exists $1\leq m<r$ such that for every set $M\subseteq \{1,\ldots,r\}$ with $m$ elements and for every $j\in M$ there exists $k_j>0$ such that 
$$\prod_{j\in M}f_j^{k_j}\in \mathit{Hol}(s_0).$$

\end{theorem}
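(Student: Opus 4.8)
The plan is to translate everything into orders of vanishing at $s_0$. Put $n_j=\ord_{s_0}f_j\in\Z$ for $j=1,\dots,r$. Since the $f_j$ are meromorphic near $s_0$ and the order is additive on products, $f_1^{k_1}\cdots f_r^{k_r}$ is holomorphic at $s_0$ precisely when $\sum_{j=1}^r k_jn_j\ge 0$. Identifying $f_1^{k_1}\cdots f_r^{k_r}$ with the vector $(k_1,\dots,k_r)$ (so the generator $f_j$ becomes the standard vector $e_j$) we obtain
\[
\mathit{Hol}(s_0)=\Big\{(k_1,\dots,k_r)\in\N^r:\ \textstyle\sum_{j=1}^r k_jn_j\ge 0\Big\},
\]
the lattice points of a rational polyhedral cone. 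Assertion i) is then simply that $n_j\ge 0$ for all $j$. With this dictionary i)$\Rightarrow$ii) and i)$\Rightarrow$iii) are immediate: if all $n_j\ge 0$ then $\mathit{Hol}(s_0)=Ar=\N^r$ is free on $f_1,\dots,f_r$, hence factorial; one takes $f=f_k$ to verify ii), and $m=1$ with all exponents $1$ to verify iii). Throughout I assume $r\ge 2$, the case $r=1$ (i.e.\ $K=\Q$) being trivial.

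The heart of the matter is a structural lemma. Split the indices into $A'=\{j:n_j>0\}$, $A_0=\{j:n_j=0\}$, $B=\{j:n_j<0\}$, and set $A=A'\cup A_0$; since $\mathit{Hol}(s_0)$ generates $\Z^r$, membership cannot confine the $B$-coordinates to $0$, so $A'\ne\emptyset$ whenever $B\ne\emptyset$. Each $e_j$ with $j\in A$ is an irreducible element of $\mathit{Hol}(s_0)$, and for every pair $i\in A'$, $j\in B$ the primitive vector
\[
w_{ij}=\frac{-n_j}{g}\,e_i+\frac{n_i}{g}\,e_j,\qquad g=\gcd(n_i,-n_j),
\]
lies on the face $\sum_\ell n_\ell x_\ell=0$ and is again irreducible: any decomposition in $\mathit{Hol}(s_0)$ must respect its two-element support and the vanishing of $\sum_\ell n_\ell x_\ell$, and primitivity then forces one summand to be $0$. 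The $w_{ij}$ are pairwise distinct and distinct from the $e_j$, so $\mathit{Hol}(s_0)$ has at least $\abs{A}+\abs{A'}\cdot\abs{B}$ irreducible elements. Since $\mathit{Hol}(s_0)$ is factorial if and only if it has exactly $r=\abs{A}+\abs{B}$ irreducibles, factoriality together with $B\ne\emptyset$ forces $\abs{A'}\cdot\abs{B}\le\abs{B}$, hence $\abs{A'}=1$. This dichotomy — \emph{factoriality forces a unique index with $n_j>0$ as soon as some $n_j<0$ occurs} — is the step I expect to be the main obstacle, as it requires pinning down enough of the Hilbert basis; the explicit generators $w_{ij}$ are what make it work.

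Granting the lemma, ii)$\Rightarrow$i) runs as follows. Suppose $B\ne\emptyset$; by the lemma there is a unique $l_0$ with $n_{l_0}>0$ and $n_\ell\le 0$ for every $\ell\ne l_0$. Choose $j_0\in B$ and apply the divisibility hypothesis to $k=j_0$, $l=l_0$: it would yield $f\in\mathit{Hol}(s_0)$ with $f_{j_0}\mid f$ and $f_{l_0}\nmid f$, i.e.\ a vector whose $j_0$-coordinate is positive and whose $l_0$-coordinate is $0$. For such a vector $\sum_\ell n_\ell f_\ell\le n_{j_0}f_{j_0}<0$, because every coordinate other than $l_0$ carries $n_\ell\le 0$ while the $l_0$-coordinate vanishes; this contradicts $f\in\mathit{Hol}(s_0)$. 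Hence $B=\emptyset$ and i) holds.

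Finally iii)$\Rightarrow$i). Suppose $B\ne\emptyset$ and translate the displayed condition: for a fixed $M$, some product $\prod_{j\in M}f_j^{k_j}$ with all $k_j>0$ lies in $\mathit{Hol}(s_0)$ if and only if $M$ meets $A'$ or $M\subseteq A_0$; so it fails exactly for the sets $M\subseteq A_0\cup B$ meeting $B$, and such a set of size $m$ exists whenever $m\le\abs{A_0}+\abs{B}$. Consequently the $m$ furnished by iii) must satisfy $m>\abs{A_0}+\abs{B}$. But the lemma gives $\abs{A'}=1$, so $r-1=\abs{A_0}+\abs{B}$, and no integer $m$ can satisfy $\abs{A_0}+\abs{B}<m<r$. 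This contradiction forces $B=\emptyset$, proving i) and completing the cycle of equivalences.
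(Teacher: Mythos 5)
Your argument is correct, but it reaches the two non-trivial implications by a different route than the paper. For ii)$\Rightarrow$i) the paper fixes a single pivot $k$ with $\ord f_k>0$, shows that the $r$ elements $f_k^{m_j}f_j$ (with $m_j$ minimal making the product holomorphic) are irreducible, invokes factoriality to conclude these are the \emph{entire} Hilbert basis, and then derives the contradiction from unique factorization of the hypothesized $f$: divisibility by $f_l$ forces the factor $f_k^{m_l}f_l$ with $m_l>0$, hence $f_k\mid f$. You instead prove a counting lemma: the boundary irreducibles $w_{ij}$ for each pair $n_i>0>n_j$, together with the $e_j$ for $n_j\ge 0$, give at least $|A|+|A'|\cdot|B|$ irreducibles, so factoriality (which caps the Hilbert basis at $r$) forces a \emph{unique} index of positive order; the contradiction is then a one-line order estimate needing no unique factorization of $f$ itself. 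For iii) the paper simply closes the cycle with the one-line implication iii)$\Rightarrow$ii) (choose $M$ of size $m<r$ containing $k$ and avoiding $l$), whereas you prove iii)$\Rightarrow$i) directly by counting the "bad" sets $M$, which is more work but pins down exactly why no admissible $m$ can exist ($m$ would have to exceed $|A_0|+|B|=r-1$ while staying below $r$). Your lemma that factoriality plus a pole forces $|A'|=1$ is a genuinely different (and reusable) structural statement from the paper's explicit determination of the Hilbert basis; the paper's version is shorter and yields more information about the basis, while yours stays entirely in the polyhedral picture. Two cosmetic points: the notation $f_\ell$ for the $\ell$-th exponent of $f$ clashes with $f_\ell$ denoting the L-functions, and your appeal to "$\mathit{Hol}(s_0)$ generates $\Z^r$" (where the paper instead uses holomorphy of $\zeta_K=\prod f_j^{d_j}$ to produce an index with $\ord f_k>0$) is legitimate since that fact is recorded in the introduction.
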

 
\begin{proof}  For a meromorphic function $f$ we denote by $\ord f$ the order of $f$ at $s_0$.
i)$\Rightarrow$ii): If $\mathit{Hol}(s_0)=Ar$ then $\mathit{Hol}(s_0)$ is factorial with the set of irreducibles $\{f_1,\ldots,f_r\}$. For $k,l\in\{1,\ldots,r\}$, $k\neq l$ and $f:=f_k$ we have $f_k\mid f$ and $f_l\nmid f$.\\
ii)$\Rightarrow$i): If Artin's conjecture is not true then there exists $l\in\{1,\ldots,r\}$ such that $\ord f_l<0$. The Dedekind zeta function of $K$ has the decomposition
$$\zeta_K(s)=f_1^{d_1}\cdot\ldots\cdot f_r^{d_r}$$
and is holomorphic at $s_0$, so there exists $k\in\{1,\ldots,r\}$ such that 
$\ord f_k>0$. For $j\in\{1,\ldots,r\}$ let $m_j:=\min\{m\geq 0\mid \ord(f_k^mf_j)\geq 0\}$. The functions $\{f_k^{m_1}f_1,\ldots,f_k^{m_r}f_r\}$ are holomorphic at $s_0$ and are irreducible as elements of the multiplicative semigroup $\mathit{Hol}(s_0)$. By the hypothesis ii) $\mathit{Hol}(s_0)$ is factorial, so the set $\{f_k^{m_1}f_1,\ldots,f_k^{m_r}f_r\}$ is the set of all irreducible elements of $\mathit{Hol}(s_0)$. Since $\ord f_l<0$ we have $m_l>0$. By the hypothesis ii) there exists  $f\in \mathit{Hol}(s_0)$ such that $f_l\mid f$ and $f_k\nmid f$. Since $f$ is a product of irreducible elements and is divisible by $f_l$ it must be divisible by $f_k^{m_l}f_l$, so $f_k\mid f$, a contradiction with $f_k\nmid f$. So Artin's conjecture is true.\\
i)$\Rightarrow$iii): If Artin's conjecture is true then $f_1,\ldots,f_r\in \mathit{Hol}(s_0)$ and iii) is satisfied with $m=1$.\\
iii)$\Rightarrow$ii): Let $k,l\in\{1,\ldots,r\}$, $k\neq l$. Let $M\subseteq \{1,\ldots,r\}$ a set with $m$ elements which contains $k$ and does not contain $l$. By the assumption iii) for every $j\in M$ there exists $k_j>0$ such that  $$f:=\prod_{j\in M}f_j^{k_j}\in \mathit{Hol}(s_0).$$ It holds that $f_k\mid f$ and $f_l\nmid f$.
\end{proof}

\bibliographystyle{plain}

\end{document}